%
%
%

\documentclass[graybox]{svmult}

\usepackage{mathptmx}       
\usepackage{helvet}         
\usepackage{courier}        
\usepackage{type1cm}        

\usepackage{makeidx}         
\usepackage{graphicx}        
\usepackage{multicol}        
\usepackage[bottom]{footmisc}

\usepackage{amsmath}
\usepackage{graphicx}
\usepackage{amssymb}
\usepackage{amsfonts}
\usepackage{graphicx,color}
\usepackage{amsbsy}

\makeindex             

\newcommand{\A}{\mathcal{A}_{p}}

\newcommand{\ZZ}{\Z_{p^r}}

\newcommand{\F}{{\mathbb F}}
\newcommand{\Z}{{\mathbb Z}}

\newcommand{\beq}{\begin{equation}}
\newcommand{\eeq}{\end{equation}}
\newcommand{\bmat}{\left[ \begin{array}}
\newcommand{\emat}{\end{array} \right]}

\newcommand{\C}{{\mathcal{C}}}

\newcommand{\R}{{\mathcal{R}}}


\begin{document}

\title{The dual of convolutional codes over $\mathbb Z_{p^r}$}
\author{Mohammed El Oued and Diego Napp and Raquel Pinto and Marisa Toste}
\institute{Mohammed El Oued \at  FSMMath Department, University of Monastir, Monastir 5050, Tunisia \email{wadyel@yahoo.fr}
\and Diego Napp \at  CIDMA - Center for Research and Development in Mathematics and Applications, Department of Mathematics, University of Aveiro, Aveiro, Portugal \email{diego@ua.pt}
\and Raquel Pinto \at  CIDMA - Center for Research and Development in Mathematics and Applications, Department of Mathematics, University of Aveiro, Aveiro, Portugal \email{raquel@ua.pt}
\and Marisa Toste \at  CIDMA - Center for Research and Development in Mathematics and Applications, Instituto Politécnico de Coimbra-ESTGOH, Coimbra, Portugal \email{marisatoste@gmail.com}}
%
%
%
\maketitle

\abstract{ An important class of codes widely used in applications is the class of convolutional codes. Most of the literature of convolutional codes is devoted to convolutional codes over finite fields. The extension of the concept of convolutional codes from finite fields to finite rings have attracted much attention in recent years due to fact that they are the most appropriate codes for phase modulation. However convolutional codes over finite rings are more involved and not fully understood. Many results and features that are well-known for convolutional codes over finite fields have not been fully investigated in the context of finite rings. In this paper we focus in one of these unexplored areas, namely, we investigate the dual codes of convolutional codes over finite rings. In particular we study the $p$-dimension of the dual code of a convolutional code over a finite ring. This contribution can be considered a generalization and an extension, to the rings case, of the work done by Forney and McEliece on the dimension of the dual code of a convolutional code over a finite field. }

\section{Introduction}
\label{sec:1}
Codes play an important role in our days. They are implemented in most of all communications systems in order to detect and correct errors that can be introduced during the transmission of information. Convolutional codes over finite rings were first introduced by \cite{ma89} and have becoming more relevant for communication systems that combine coding and modulation.

We will consider convolutional codes constituted by left compact sequences in $\Z_{p^r}$, where $p$ is a prime and $r$ an integer greater than $1$, i.e., the codewords of the code will be of the form
$$
\begin{array}{cccc}
w: & \mathbb Z & \rightarrow &  \Z_{p^r}^n \\
& t & \mapsto & w_t
\end{array}
$$
where $w_t=0$ for $t < k$ for some $k \in \mathbb Z$. These sequences can be represented by Laurent series, $w(D)= \displaystyle \sum_{t=k}^\infty w_t D^t$. Let us denote by $\Z_{p^r}((D))$ the ring of Laurent series over $\mathbb Z_{p^r}$. Moreover, we will represent the ring of polynomials over $\Z_{p^r}$ by $\Z_{p^r}[D]$ and the ring of rational matrices over $\Z_{p^r}$ by $\Z_{p^r}(D)$. More precisely, $\Z_{p^r}(D)$ is the set
$$
\small
\{\frac{p(D)}{q(D)}: p(D),q(D) \in \Z_{p^r}[D] \mbox{ and the coefficient of the smallest power of $D$ in $q(D)$ is a unit}\}
$$
modulo the equivalence relation
$$
\frac{p(D)}{q(D)} \sim \frac{p_1(D)}{q_1(D)} \mbox{ if and only if } p(D)q_1(D) = f_1(D) q(D).
$$
Convolutional codes over finite rings behave very differently from convolutional codes over finite fields due to the existence of zero divisors. One main difference is that a convolutional code over a finite field $\mathbb F$ is always a free module over $\F((D))$ which does not happen in the ring case. In order to deal with this problem we will consider a new type of basis, for $\Z_{p^r}[D]$-submodules of $\Z_{p^r}^n[D]$, which will allow us to define a kind of basis for every convolutional code, called $p$-basis, and a related type of dimension, called $p$-dimension. This notions have been extensively used in the last decade \cite{fa01,jo99,ko95,ku09,ku07,NappPintoToste2015,Mo15,el13}, extending the ideas of $p$-adic expansion, $p$-dimension, $p$-basis, etc, used in the context of $\Z_{p^r}$-submodules of $\Z_{p^r}^n$, \cite{ca00a,no00,no01,va96}.

In this paper we will study the dual of a convolutional code over $\Z_{p^r}[D]$. In particular, we will show that the dual of a convolutional code is also a convolutional code and we will relate the $p$-dimensions of a convolutional code and its dual. Hence this work generalizes the results derived by Forney and McEliece \cite{fo70,mc98} of convolutional codes over finite fields.

\section{The module $\mathbb Z_{p^r}^n[D]$}
\label{sec:2}

Any element in $\Z_{p^r}^n$ can be written uniquely as a linear combination of $1,p,p^2,\dots $ $\dots, p^{r-1}$, with coefficients in $\A=\{0,1, \dots,p-1\} \subset \Z_{p^r}$ (called the $p$-adic expansion of the element) \cite{ca00a}. Note that all elements of $\A \backslash\{0\}$ are units. This property provides a kind of linear independence on the elements of $\A$. In \cite{va96}, the authors considered this property to define a special type of linear combination of vectors, called {\it $p$-linear combination}, which allowed to define the notion of {\it $p$-generator sequence}, {\it $p$-basis} and {\it $p$-dimension} for every submodule of $\Z^n_{p^r}$. These notions were extended for vectors in \cite{ku07} and we recall them in this section.

\begin{definition}\cite{ku07} Let $v_1(D), \dots, v_k(D) $ be in $\Z^n_{p^r}[D]$. The vector $\displaystyle \sum_{j=1}^k a_j(D) v_j(D)$,
with $a_j(D) \in \A [D]$, is said to be a {\bf $\boldsymbol{p}$-linear combination} of ${v_1(D), \dots, v_k(D)}$ and the set of all $p$-linear combination of ${v_1(D), \dots, v_k(D)}$ is called the {\bf $\boldsymbol{p}$-span} of $\{v_1(D), \dots, v_k(D) \}$, denoted by
$p$-span $(v_1(D), \dots, v_k(D))$.
\end{definition}

Note that the $p$-span of a set of vectors is not always a module. We need to introduce an extra condition to be fulfilled by the vectors.

\begin{definition}\cite{ku07}
An ordered set of vectors $(v_1(D), \dots, v_k(D))$ in $\Z^n_{p^r}[D]$ is said to be a {\bf $\boldsymbol{p}$-generator sequence} if
$p \, v_i(D)$ is a $p$-linear combination of $v_{i+1}(D), \dots, v_k(D)$, $\; \; i=1, \dots, k-1$,
and $p \, v_k(D)=0$.
\end{definition}

\begin{lemma}\cite{ku07}
If $(v_1(D), \dots, v_k(D))$ is a $p$-generator sequence in $\Z^n_{p^r}[D]$ it holds that
$
p\mbox{-span}(v_1(D), \dots, v_k(D))=\mbox{span}(v_1(D), \dots, v_k(D)),
$
and consequently we have that $p$-span$(v_1(D), \dots, v_k(D))$ is a $\ZZ$-submodule of $\Z^n_{p^r}[D]$.
\end{lemma}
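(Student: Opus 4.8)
The plan is to prove a two-sided inclusion, the trivial direction being $p\mbox{-span}(v_1(D),\dots,v_k(D)) \subseteq \mbox{span}(v_1(D),\dots,v_k(D))$, which holds simply because $\AD \subseteq \ZZD$, so that every $p$-linear combination is in particular a $\ZZD$-linear combination. Since $\mbox{span}(v_1(D),\dots,v_k(D))$ is, by construction, the $\ZZD$-submodule of $\Z^n_{p^r}[D]$ generated by $v_1(D),\dots,v_k(D)$, once I establish the reverse inclusion $\mbox{span}\subseteq p\mbox{-span}$ the two sets coincide and the ``consequently'' clause — that the $p$-span is a submodule — follows for free. Thus the whole content is to show that every $\ZZD$-linear combination $\sum_{j=1}^k b_j(D)\,v_j(D)$, with $b_j(D)\in\ZZD$, is already a $p$-linear combination.

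I would proceed by induction on $k$. For the base case $k=1$ the $p$-generator condition reads $p\,v_1(D)=0$, so $p^s v_1(D)=0$ for all $s\ge 1$; writing a coefficient $b_1(D)\in\ZZD$ through its coefficient-wise $p$-adic expansion as $b_1(D)=b_1^{(0)}(D)+p\,\tilde b_1(D)$ with $b_1^{(0)}(D)\in\AD$, the second term is annihilated and $b_1(D)v_1(D)=b_1^{(0)}(D)v_1(D)$ lies in the $p$-span. For the inductive step I would first observe that the tail $(v_2(D),\dots,v_k(D))$ is again a $p$-generator sequence (the defining relations for indices $i\ge 2$ are inherited verbatim, and $p\,v_k(D)=0$), so by the induction hypothesis $M':=p\mbox{-span}(v_2(D),\dots,v_k(D))=\mbox{span}(v_2(D),\dots,v_k(D))$ is a $\ZZD$-submodule.

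Next I would write a general element of the span as $b(D)v_1(D)+m'$ with $b(D)\in\ZZD$ and $m'\in M'$, and split off the leading coefficient's bottom $p$-adic digit, $b(D)=b_0(D)+p\,b_1(D)$ with $b_0(D)\in\AD$ and $b_1(D)\in\ZZD$. The crux is that the carry term is absorbed by the tail: since by the $p$-generator property $p\,v_1(D)$ is a $p$-linear combination of $v_2(D),\dots,v_k(D)$, we have $p\,v_1(D)\in M'$, and because $M'$ is a module $b_1(D)\big(p\,v_1(D)\big)\in M'$. Hence
$$
b(D)v_1(D)+m' = b_0(D)v_1(D) + \big(b_1(D)\,p\,v_1(D) + m'\big) \in \AD\,v_1(D)+M' = p\mbox{-span}(v_1(D),\dots,v_k(D)),
$$
which gives $\mbox{span}\subseteq p\mbox{-span}$ and closes the induction.

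I expect the main obstacle to be resisting the temptation of a direct ``closure'' argument: one cannot simply show the $p$-span is closed under addition and under multiplication by $p$ digit-by-digit, because sums and products of elements of $\A=\{0,\dots,p-1\}$ overflow beyond $p-1$ and leave $\AD$, which makes such an attempt circular. The induction on $k$ is precisely what tames this — the $p$-generator hypothesis guarantees that the overflow produced by multiplying $v_1(D)$ by $p$ lives entirely in the span of the strictly later generators, where the inductive hypothesis has already upgraded the $p$-span to a genuine module and can therefore swallow the carry. The only points needing care are the verification that the tail remains a $p$-generator sequence and that $p\,v_1(D)\in M'$, both of which are immediate from the definitions.
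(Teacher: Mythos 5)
Your proof is correct, but there is nothing in the paper to compare it against: the paper states this lemma with a citation to \cite{ku07} and gives no proof at all, so the comparison can only be made on the merits of your argument. Those merits hold up. The trivial inclusion $p\mbox{-span}\subseteq\mbox{span}$ follows from $\AD\subseteq\ZZD$ as you say; the base case $k=1$ works because $p\,v_1(D)=0$ annihilates the carry term of the coefficient-wise $p$-adic splitting; the tail $(v_2(D),\dots,v_k(D))$ is indeed again a $p$-generator sequence, so the induction hypothesis legitimately upgrades $M'=p\mbox{-span}(v_2(D),\dots,v_k(D))$ to a $\ZZD$-submodule; and the key absorption step is sound, since $p\,v_1(D)\in M'$ by the $p$-generator property, hence $b_1(D)\,p\,v_1(D)+m'\in M'$, and rewriting that element as a $p$-linear combination of $v_2(D),\dots,v_k(D)$ exhibits the original vector as a $p$-linear combination of all of $v_1(D),\dots,v_k(D)$. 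This induction on the length of the sequence is essentially the standard argument from the literature the paper points to (the technique of \cite{va96} for $\Z_{p^r}^n$, extended to $\Z_{p^r}^n[D]$ in \cite{ku07}), so your route is the expected one rather than a novel alternative; your closing observation that a direct digit-by-digit closure argument would be circular, because sums and products of digits in $\A$ overflow out of $\AD$, correctly identifies why the induction (and the $p$-generator hypothesis) is genuinely needed.
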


Note that if $M=span(v_1(D), \dots, v_k(D))$ is a submodule of $\mathbb Z_{p^r}[D]$, then
\begin{equation}
\label{eq:07}
\begin{split}
(v_1(D), pv_1(D) \dots, & p^{r-1}v_1(D),v_2(D), pv_2(D), \dots, \\
& \dots, p^{r-1}v_2(D), \dots, v_l(D), pv_k(D) \dots, p^{r-1}v_k(D)).
\end{split}
\end{equation}

is a $p$-generator sequence of $M$..

\begin{definition}\cite{ku07}
The vectors $v_1(D), \dots, v_k(D)$  in $\Z_{p^r}^n[D]$ are said to be {\bf $\boldsymbol{p}$-linearly independent} if the only $p$-linear combination of
$v_1(D), \dots, v_k(D)$ that is equal to $0$ is the trivial one.
\end{definition}

\begin{definition}\cite{ku07} An ordered set of vectors $(v_1(D), \dots, v_k(D))$ which is a $p$-linearly independent $p$-generator sequence of a submodule $M$ of $\mathbb Z_{p^r}^n[D]$ is said to be a {\bf $\boldsymbol{p}$-basis} of $M$.
\end{definition}

It is proved in \cite{ku09} that two $p$-bases of a $\Z_{p^r}$-submodule $M$ of $\Z_{p^r}^n[D]$ have the same number of elements. This number of elements is called {\bf $\boldsymbol{p}$-dimension} of $M$ and is denoted by $p$-$dim (M)$.

The same notions and results are satisfied for the module $\mathbb Z_{p^r}^n$ in \cite{va96}. In fact, as mentioned before, these notions were first introduced in this paper for such modules and later extended for the module $\mathbb Z_{p^r}^n[D]$ in \cite{ku07}.


\section{Convolutional Codes over $\Z_{p^r}$}
\label{subsec:2_2}

\begin{definition}
A {\bf convolutional code $\mathcal{C}$} of length $n$ is a $\Z_{p^r}((D))$-submodule of  $\Z^n_{p^r}((D))$ for which there exists a polynomial matrix $ G(D) \in \Z_{p^r}^{\tilde k \times n}[D]$ such that
\begin{eqnarray*}
{\mathcal C} & = & \mbox{Im}_{\Z_{p^r}((D))}  G(D)\\
& = & \left\{u(D) G(D) \in \Z^n_{p^r}((D)) :\, u(D) \in \Z^{\widetilde k}_p((D))\right\}.
\end{eqnarray*}

The matrix $  G(D)$ is called a {\bf generator matrix} of $\C$. If $ G(D)$ is full row rank then it is called an {\bf encoder} of $\C$.

Moreover, if
\begin{eqnarray*}
{\mathcal C}  & = & \mbox{Im}_{{\cal A}_{p^r}((D))} G(D)\\
  & = & \left\{u(D)G(D) \in \Z^n_{p^r}((D)) :\, u(D) \in \mathcal{A}^k_p((D))\right\}.
\end{eqnarray*}
with $G(D) \in \Z_{p^r}^{k \times n}[D]$ a polynomial matrix whose rows form a $p$-basis of $\mathcal{C}$, then $G(D)$ is a {\bf $\boldsymbol{p}$-encoder} of $\mathcal{C}$ and we say that $\C$ has $p$-dimension $k$.

If there exists a constant matrix $G$ such that
$$
{\mathcal C}  = \left\{u(D) G \in \Z^n_{p^r}((D)) :\, u(D) \in \Z^{\widetilde k}_p((D))\right\},
$$
then $\cal C$ is called a {\bf block code}.
\end{definition}

Obviously, block codes are a particular case of convolutional codes. Every block code $\cal C$ admits a generator matrix in standard form \cite{no01}
\begin{equation}\label{standard form}
 G = \left[\begin{array}{ccccccc}
    I_{k_0} & A^0_{1,0} & A^0_{2,0} & A^0_{3,0} & \cdots & A^0_{r-1,0} & A^0_{r,0} \\
        0 & pI_{k_1} & pA^1_{2,1} & pA^1_{3,1} & \cdots & pA^1_{r-1,1} & pA^1_{r,1} \\
       0 & 0 & p^2I_{k_2} & p^2A^2_{3,2} & \cdots & p^2A^2_{r-1,2} & p^2A^2_{r,2} \\
       0 & 0 & 0 & p^{r-1}I_{k_3} & \cdots & 0 & p^{r-1}A^3_{r,3} \\
    \vdots&\vdots&\vdots&\vdots&\ddots&\vdots&\vdots\\
    0 & 0 & 0 & 0 & \cdots  & p^{r-1}I_{k_{r-1}} & p^{r-1}A^{r-1}_{r,r-1}
  \end{array}\right].
\end{equation}
The integers $k_0, k_1, \dots, k_{r-1}$ are called the {\bf parameters} of $ G$. All encoders of $\cal C$ in standard form have the same parameters $k_0, k_1, \dots, k_{r-1}$.


\bigskip

\noindent Note that if $G(D)$ is a generator matrix of a convolutional code $\C$ and $X(D)$ is an invertible rational matrix such that $X(D) G(D)$ is polynomial, then $\mbox{Im}_{\Z_{p^r}((D))} G(D) = \mbox{Im}_{\Z_{p^r}((D))} X(D) G(D)$, which means that $X(D) G(D)$ is also a generator matrix of $\C$.
Thus, the next straightforward result follows. We include its proof for the sake of completeness.
\begin{lemma} \label{rational encoder}Let $\C$ be a submodule of $\Z_{p^r}^n((D))$ given by $\C=\mbox{Im}_{\Z_{p^r}((D))}N(D)$, where $N(D) \in \Z_{p^r}^{\tilde k \times n}(D)$. Then $\C$ is a convolutional code, and if $N(D)$ is full row rank, $\C$ is a free code of dimension $k$.
\end{lemma}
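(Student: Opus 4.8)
The plan is to reduce the rational generator $N(D)$ to a polynomial one by clearing denominators through left multiplication by an invertible rational matrix, and then to quote the remark immediately preceding the statement.

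First I would produce a polynomial generator matrix. Writing each entry of $N(D)$ as $N_{ij}(D)=p_{ij}(D)/q_{ij}(D)$, where $q_{ij}(D)$ has a unit as the coefficient of its lowest power of $D$, set $d(D)=\prod_{i,j} q_{ij}(D)$ and $X(D)=d(D)\,I_{\tilde k}$. Since each $q_{ij}(D)$ divides $d(D)$ in $\Z_{p^r}[D]$, the matrix $G(D):=X(D)N(D)=d(D)N(D)$ is polynomial. The point requiring the hypothesis on $\Z_{p^r}(D)$ is that $X(D)$ is invertible: because each $q_{ij}(D)$ has a unit as its lowest-power coefficient, the product of these coefficients is again a unit (in particular nonzero), and this product is precisely the lowest-power coefficient of $d(D)$; hence $d(D)$, and with it $X(D)$, is a unit in $\Z_{p^r}(D)$.

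Now I would invoke the remark preceding the lemma: since $X(D)$ is an invertible rational matrix with $X(D)N(D)$ polynomial, $\mbox{Im}_{\Z_{p^r}((D))}N(D)=\mbox{Im}_{\Z_{p^r}((D))}G(D)$. As $G(D)$ is a polynomial matrix, $\C=\mbox{Im}_{\Z_{p^r}((D))}G(D)$ is a convolutional code by definition, which settles the first claim.

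For the second claim, assume $N(D)$ is full row rank. I would first check that full row rank is preserved under left multiplication by the invertible matrix $X(D)$: if $v(D)G(D)=v(D)X(D)N(D)=\big(v(D)X(D)\big)N(D)=0$, then $v(D)X(D)=0$ because $N(D)$ is full row rank, and hence $v(D)=0$ because $X(D)$ is invertible. Thus $G(D)$ is a full row rank polynomial generator, i.e. an encoder, so the map $u(D)\mapsto u(D)G(D)$ is injective and identifies $\C$ with $\Z_{p^r}((D))^{\tilde k}$; therefore $\C$ is free of dimension $\tilde k$ (the number of rows of $N(D)$). There is no serious obstacle here: the only delicate point, owing to the zero divisors in $\Z_{p^r}$, is the verification that $X(D)$ is a genuine unit, and this is exactly what the unit condition built into the definition of $\Z_{p^r}(D)$ provides.
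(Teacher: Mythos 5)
Your proof is correct and follows essentially the same route as the paper: clear denominators by left-multiplying $N(D)$ with an invertible polynomial matrix, conclude equality of images over $\Z_{p^r}((D))$, and observe that full row rank is preserved under this multiplication. The only (harmless) differences are that you use the single common denominator $d(D)=\prod_{i,j}q_{ij}(D)$ in place of the paper's diagonal matrix of row-wise least common multiples, and that you spell out the unit and full-row-rank verifications that the paper merely asserts.
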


\begin{proof}
Write $N(D)=\left[\frac{p_{ij}(D)}{q_{ij}(D)}\right] $, where $p_{ij}(D), q_{ij}(D) \in \Z_{p^r}[D]$, and the coefficient of the smallest power of $D$ in $q_{ij}(D)$ is a unit. Consider the diagonal matrix $Y(D) \in \Z_{p^r}^{\tilde k \times \tilde k}[D]$ whose element of the row $i$ is the least common multiple of $q_{i1}(D), q_{i2}(D), \dots, q_{i\tilde k}(D)$. Thus $Y(D)$ is invertible and $N(D)=Y(D)^{-1}X(D)$ for some polynomial matrix $X(D) \in \Z_{p^r}^{\tilde k \times n}[D]$. Then $\mbox{Im}_{\Z_{p^r}((D))}N(D) = \mbox{Im}_{\Z_{p^r}((D))}X(D)$, which means that $X(D)$ is a generator matrix of $\C$. The last statement of the lemma follows from the fact that $N(D)$ is full row rank if and only if $X(D)$ is full row rank.
\qed
\end{proof}

\bigskip
\noindent Next we will consider a decomposition of a convolutional code into simpler components. For that we need the following lemma.
\begin{lemma}
Let $M$ be a submodule of  $\Z_{p^r}^n((D))$. Then, there exists a unique family $M_0,\ldots,M_{r-1}$ of free submodules of $\Z_{p^r}^n((D))$ such that
\begin{equation}\label{e1}
M=M_0\oplus pM_1\oplus\ldots\oplus p^{r-1}M_{r-1}.
\end{equation}
\end{lemma}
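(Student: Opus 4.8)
The plan is to exploit the fact that $S:=\ZZ((D))$ is itself a commutative chain ring, so that $M$ behaves like a submodule of a free module over a ring of the same type as $\ZZ$. First I would verify that $f\in S$ is a unit precisely when its reduction modulo $p$ is nonzero in the field $\F_p((D))=S/pS$: given $\bar f\neq0$ one lifts a mod-$p$ inverse to get $fg=1+ph$, and since $p^r=0$ the factor $1+ph$ is invertible with inverse $\sum_{j=0}^{r-1}(-ph)^j$, so $f$ is a unit. Writing any nonzero $f$ as $p^{i}\cdot(\text{unit})$, where $i$ is the minimal $p$-adic valuation among its coefficients, then shows that the ideals of $S$ are exactly $S=(p^0)\supset(p)\supset\cdots\supset(p^r)=(0)$. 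In particular $S$ is Noetherian, hence the submodule $M\subseteq S^n=\ZZ^n((D))$ is finitely generated.

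For existence I would mimic the standard form (\ref{standard form}), now carried out over the chain ring $S$. Starting from any generator matrix of $M$ and using only invertible row operations over $S$ (which preserve the row span, and hence $M$), one reaches a row-echelon ``staircase'' form in which the leading coefficient of each nonzero row equals $p^{i}$ times a unit for some level $i$, the pivots sitting in distinct columns and each block having zeros in the columns of the earlier blocks; after scaling, the leading units may be taken to be $1$. Grouping the rows by their level $i$ and letting $M_i$ be the $S$-span of the corresponding pivot vectors with the factor $p^{i}$ removed, the identity blocks make each $M_i$ free, and projecting onto the pivot columns from the top down shows that $\sum_i p^i m_i=0$ forces every $p^i m_i=0$, so $M=M_0\oplus pM_1\oplus\cdots\oplus p^{r-1}M_{r-1}$ is a direct sum; this is exactly the $p$-basis/standard-form construction of the earlier sections transported to $S$. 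An alternative, induction on $r$ that splits off the free part seen modulo $p$ and recurses on the $p$-divisible remainder, would give the same decomposition.

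The step I expect to be the main obstacle is uniqueness, where the substantive content is the invariance of the ranks $k_i:=\operatorname{rank}M_i$. Since $M_i\cong S^{k_i}$, the map $x\mapsto p^{i}x$ identifies $p^iM_i\cong(S/(p^{r-i}))^{k_i}$, so $M\cong\bigoplus_{i=0}^{r-1}(S/(p^{r-i}))^{k_i}$ as $S$-modules. I would then recover the $k_i$ from $M$ alone through the $\F_p((D))$-dimensions $d_j(M):=\dim_{\F_p((D))}\!\bigl(p^{j}M/p^{j+1}M\bigr)$: evaluating these on each cyclic summand gives $d_j(M)=\sum_{i=0}^{r-1-j}k_i$, whence $k_i=d_{r-1-i}(M)-d_{r-i}(M)$ with the convention $d_r(M)=0$. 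As the numbers $d_j(M)$ depend only on $M$, the ranks, and with them the type of the family $(M_0,\dots,M_{r-1})$, are uniquely determined, while the free summands themselves are the ones pinned down by the standard form of the previous step. The care needed here is precisely in making this recovery canonical, which is why uniqueness, rather than existence, is the heart of the argument.
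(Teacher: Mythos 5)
Your proof is sound and takes a genuinely different route from the paper's. The paper argues by a short induction on $r$: project $M$ onto $\Z_p((D))$, lift a basis of the projection to a free submodule $M_0\subset M$ with $\overline{M_0}=\overline{M}$, invoke semisimplicity of $\ZZ^n((D))$ to get a complement $M_0'$ of $M_0$ inside $M$, write $M_0'=pM_1'$, and recurse; uniqueness is never addressed. You instead establish the ring structure of $S=\ZZ((D))$ itself --- your unit criterion, the factorization $f=p^i\cdot(\mbox{unit})$ and the ideal chain $(1)\supset(p)\supset\cdots\supset(p^r)=(0)$ are all correct --- so that $S$ is a chain ring, hence Noetherian and $M$ finitely generated, and you get existence from a row-echelon reduction of a generator matrix (your pivot-column projections do give directness of the sum and freeness of each $M_i$), and rank invariance from $d_j(M)=\dim_{\Z_p((D))}\bigl(p^jM/p^{j+1}M\bigr)$. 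Your route buys two things. First, rigor exactly where the paper is weakest: $\ZZ^n((D))$ is \emph{not} semisimple (its proper nonzero ideals $p^iS$ are not direct summands of $S$), so the paper's complement step is unjustified as written, whereas your echelon form constructs the complement explicitly; it amounts to transporting the standard form (\ref{standard form}) from block codes to $\ZZ((D))$. Second, your formula $k_i=d_{r-1-i}(M)-d_{r-i}(M)$ actually proves that the ranks are invariants of $M$, a fact the paper relies on later (``the family $\{k_0,\ldots,k_{r-1}\}$ is a characteristic of the code'') but never proves. (The ``alternative'' you mention in passing --- induction splitting off the free part seen mod $p$ --- is precisely the paper's argument.)

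One caveat, which concerns the statement more than your proof: uniqueness of $(M_0,\ldots,M_{r-1})$ \emph{as submodules}, which is what the lemma literally claims, is false. For $r=2$, $n=2$, the module $M$ generated by $(1,0)$ and $(0,p)$ over $\Z_{p^2}((D))$ satisfies $M=M_0\oplus pM_1$ both for $M_0=\mbox{span}\{(1,0)\}$ and for $M_0=\mbox{span}\{(1,p)\}$, with the same $M_1=\mbox{span}\{(0,1)\}$. Only the ranks $k_i$ are determined by $M$, and that is exactly what your $d_j$-computation establishes. Accordingly, you should drop the closing suggestion that the summands themselves are ``pinned down by the standard form'' (different reductions produce different, equally valid summands) and state uniqueness at the level of the ranks; the paper's own proof, for its part, says nothing about uniqueness whatsoever.
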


\begin{proof}

Let $\overline{M}$ be the projection of $M$ over $\Z_p((D))$ and denote its dimension  by $k_0(M)$. Let $M_0$ be the free code over $\ZZ((D))$ of rank $k_0$ satisfying  $\overline{M}=\overline{M_0}$ and $M_0\subset M$.  As $\ZZ^n((D))$ is a semisimple module, $M_0$ admits a complement  code $M_0'$ in $M$. Necessarily, there exists a code $M_1'$ such that $M_0'=pM_1'$.  We have $M=M_0\oplus pM_1'$.
Then by induction we have the result.
\qed
\end{proof}

\noindent Note that if $\cal C$ is a block code, this decomposition as ${\cal C}= {\cal C}_0 \oplus p {\cal C}_1\oplus\ldots\oplus p^{r-1} {\cal C}_{r-1}$ where ${\cal C}_0, \dots , {\cal C}_{r-1}$ are free block codes, is directly derived from a generator matrix in standard form. In fact, if $G$, of the form (\ref{standard form}), is a generator matrix of $\C$ then $p^i {\cal C}_i = \mbox{Im}_{\Z_{p^r}((D))} p^i G_i$, where $G_i=[0 \cdots 0 \; I_{k_i} \; A^i_{2,i} \cdots  A^i_{r,i}]$, $i=0, \dots, r-1$.

\bigskip

\noindent Next we will show that any convolutional code $\C$ also admits such decomposition.

\noindent Let $G(D)$ be a generator matrix of $\C$. If $G(D)$ is full row rank then $\C$ is free and $\C=\C_0$.

\noindent Let us assume now that $G(D)$ is not full row rank. Then the projection of $G(D)$ into $\Z_p[D]$, $\overline{G (D)} \in \Z_p^{k \times n}[D]$, is also not full row rank and there exists a nonsingular matrix $F_0(D) \in \Z_p^{k \times k}[D]$ such that $F_0(D)\overline{G(D)}=\left[ \begin{array}{c} \widetilde{G}_0(D) \\ 0 \end{array}
 \right]$ modulo $p$, where $\widetilde{G}_0(D)$ is full row rank with rank $k_0$. Considering ${F}_0(D) \in \Z_{p^r}^{k \times k}[D]$, it follows that
${F}_0(D){G}(D)=\left[\begin{array}{c}{G}_0(D) \\p \widehat{G}_1(D) \end{array}\right]$, where $G_0(D) \in \Z_{p^r}^{k_0 \times n}$ is such that $\overline{G_0(D)}=\widetilde{G}_0(D)$. Moreover, since $F_0(D)$ is invertible, $\left[\begin{array}{c}{G}_0(D) \\p \widehat{G}_1(D) \\\end{array}\right]$ is also a generator matrix of $\C$.

\noindent Let us now consider $F_1(D) \in \Z_{p}^{(k-k_0) \times (k-k_0)}[D]$ such that
$F_1(D)\overline{\widehat{G}_1(D)}=\left[\begin{array}{c}\widetilde{G}_1(D) \\ 0 \\ \end{array}\right]$ modulo $p$, where $\widetilde{G}_1(D)$ is full row rank with rank $k_1$.
Then, considering $F_1(D) \in \Z_{p^r}^{(k-k_0) \times (k-k_0)}[D]$, it follows that ${F}_1(D)\widehat{G}_1(D)=\left[\begin{array}{c} {G}'_1(D) \\ p \widehat{G}_2(D) \end{array} \right]$, where $G'_1(D) \in \Z_{p^r}^{\tilde k_1 \times n}$ is such that $\overline{G'_1(D)}=\widetilde G(D)$, and therefore
$$
\left[
  \begin{array}{cc}
    I_{k_0} & 0 \\
    0 & F_1(D)
  \end{array}
\right]F_0(D)G(D)=
\left[
  \begin{array}{c}
    G_0(D) \\
    pG'_1(D) \\
    p^2\widehat{G}_2(D)
  \end{array}
\right].
$$
If $\left[
  \begin{array}{c}
    G_0(D) \\
    G'_1(D)
  \end{array}
\right]$ is not full row rank, then there exists a permutation matrix $P$ and a rational matrix $L(D) \in \Z_{p^r}^{\tilde k_1 \times k_0}(D)$ such that
$$
P\left[
  \begin{array}{cc}
    I_{k_0} & 0 \\
    L_1(D) & I_{k_1}
  \end{array}
\right]\left[
  \begin{array}{c}
    G_0(D) \\
    pG'_1(D)
  \end{array}
\right]=
\left[
  \begin{array}{c}
    G_0(D) \\
    pG''_1(D) \\
    p^2G'_2(D)
  \end{array}
\right],
$$
where $G''_1(D) \in \Z_{p^r}^{k_1 \times n}(D)$ and $G'_2(D) \in \Z_{p^r}^{(\tilde k_1-k_1) \times n}(D)$ are rational matrices and $\left[
  \begin{array}{c}
    G_0(D) \\
    G''_1(D)
  \end{array}
\right]$ is a full row rank rational matrix. Note that since $P\left[
  \begin{array}{cc}
    I_{k_0} & 0 \\
    L_1(D) & I_{k_1}
  \end{array}
\right]$ is nonsingular it follows that
$$\mbox{Im}_{\Z_{p^r}((D))} \left[
  \begin{array}{c}
    G_0(D) \\
    pG'_1(D)
  \end{array}
\right] =  \mbox{Im}_{\Z_{p^r}((D))} \left[
  \begin{array}{c}
    G_0(D) \\
    pG''_1(D) \\
    p^2G'_2(D)
  \end{array}
\right].$$
Let $G_1(D) \Z_{p^r}^{k_1 \times n}[D]$ and $G''_2(D)\in \Z_{p^r}^{(\tilde k_1-k_1) \times n}[D]$ be polynomial matrices (see Lemma \ref{rational encoder}) such that $$\mbox{Im}_{\Z_{p^r}((D))} \left[
  \begin{array}{c}
    G_0(D) \\
    pG''_1(D) \\
    p^2G'_2(D)
  \end{array}
\right] =  \mbox{Im}_{\Z_{p^r}((D))} \left[
  \begin{array}{c}
    G_0(D) \\
    pG_1(D) \\
    p^2G''_2(D)
  \end{array}
\right].$$ Then $$
\left[
  \begin{array}{c}
    G_0(D) \\
    pG_1(D) \\
    p^2G''_2(D)\\
    p^2\widehat{G}_2(D)
  \end{array}
\right]$$
is still a generator matrix of $\C$ such that $ \left[ \begin{array}{c}
    G_0(D) \\
    G_1(D)
  \end{array}
\right] $ is full row rank.

\noindent Proceeding in the same way we obtain a generator matrix of $\cal C$ of the form $$
\left[
       \begin{array}{c}
         G_0(D) \\
         pG_1(D) \\
         \vdots \\
         p^{r-1}G_{r-1}(D) \\
       \end{array}
     \right],
$$
and such that
$$
\left[
       \begin{array}{c}
         G_0(D) \\
         G_1(D) \\
         \vdots \\
         G_{r-1}(D) \\
       \end{array}
     \right]
$$
is full row rank. Thus $\C_i:= \mbox{Im} \, G_i(D)$ is a free convolutional code, $i=0, 1, \dots, r-1$, and $\C=\C_0 \oplus p \C_1 \oplus \dots \oplus p^{r-1} \C_{r-1}$. If we denote by $k_i$ the rank of $\C_i$ then the family $\{k_0,\ldots,k_{r-1}\}$ is a characteristic of the code. Moreover, it's clear that $\C$ is free if and only if $k_i=0$ for $i=1\ldots r-1$.

\bigskip

\noindent The following lemmas will be very useful for deriving the results of the remaining sections.

\begin{lemma} \label{1_0}
Let $\C$ be a free convolutional code of length $n$ over $\Z_{p^r}((D))$ and rank $k$. Then, $p$-$dim(p^i\C)=(r-i)k$.
\end{lemma}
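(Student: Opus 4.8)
The plan is to write down an explicit $p$-basis of $p^i\C$ and count its elements. Since $\C$ is free of rank $k$, it has a basis $v_1(D),\dots,v_k(D)$, and by Lemma~\ref{rational encoder} we may take these to be the rows of a full row rank polynomial generator matrix $G(D)\in\Z_{p^r}^{k\times n}[D]$. Being a basis, $v_1(D),\dots,v_k(D)$ are linearly independent over $\Z_{p^r}((D))$: whenever $\sum_{l=1}^{k}b_l(D)v_l(D)=0$ with $b_l(D)\in\Z_{p^r}((D))$, we must have $b_l(D)=0$ for every $l$.

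First I would observe that $p^i\C=\mbox{span}(p^iv_1(D),\dots,p^iv_k(D))$, and, mimicking the construction in~(\ref{eq:07}) applied to these generators (discarding the redundant zero entries, since $p^rv_l(D)=0$), form the ordered set
\[
\bigl(p^iv_1(D),\dots,p^{r-1}v_1(D),\;p^iv_2(D),\dots,p^{r-1}v_2(D),\;\dots,\;p^iv_k(D),\dots,p^{r-1}v_k(D)\bigr),
\]
which has exactly $(r-i)k$ entries. I would then verify it is a $p$-generator sequence: multiplying an entry $p^{i+j}v_l(D)$ with $i+j<r-1$ by $p$ gives the next entry $p^{i+j+1}v_l(D)$, a $p$-linear combination of later entries with coefficient $1$, whereas multiplying a top entry $p^{r-1}v_l(D)$ by $p$ gives $p^rv_l(D)=0$, the trivial $p$-linear combination.

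The heart of the argument, and the step I expect to require the only genuine work, is $p$-linear independence. Suppose a $p$-linear combination of the listed vectors vanishes, say $\sum_{l=1}^{k}\sum_{j=0}^{r-1-i}a_{lj}(D)\,p^{i+j}v_l(D)=0$ with digits $a_{lj}(D)\in\A((D))$. Regrouping by $v_l(D)$ yields $\sum_{l=1}^{k}b_l(D)v_l(D)=0$, where $b_l(D)=\sum_{j=0}^{r-1-i}a_{lj}(D)\,p^{i+j}$. By the linear independence of the basis we get $b_l(D)=0$ for each $l$. But $b_l(D)=p^ia_{l0}(D)+p^{i+1}a_{l1}(D)+\cdots+p^{r-1}a_{l,r-1-i}(D)$ is precisely the $p$-adic expansion of $b_l(D)$ with digits in $\A((D))$, so the uniqueness of the $p$-adic expansion forces $a_{lj}(D)=0$ for all $j$. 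Hence only the trivial combination is zero.

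Combining the two verifications, the displayed ordered set is a $p$-linearly independent $p$-generator sequence of $p^i\C$, i.e.\ a $p$-basis, and therefore $p$-$dim(p^i\C)=(r-i)k$, as claimed (the case $i=0$ recovering $p$-$dim(\C)=rk$). The only delicate point is transferring the uniqueness of $p$-adic expansions from $\Z_{p^r}$ to $\A((D))$-coefficients, but this is immediate coefficient-by-coefficient from the classical uniqueness over $\Z_{p^r}$.
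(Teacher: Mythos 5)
Your proof is correct and follows essentially the same route as the paper: the paper also takes a full row rank polynomial encoder $G(D)$ of $\C$ and observes that stacking $p^iG(D), p^{i+1}G(D), \dots, p^{r-1}G(D)$ yields a $p$-encoder of $p^i\C$ with $(r-i)k$ rows, the full row rank of $G(D)$ being what guarantees $p$-linear independence. You have merely written out the verifications (the $p$-generator sequence property and the reduction of $p$-linear independence to uniqueness of $p$-adic expansions) that the paper leaves implicit.
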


\begin{proof}
Let $G(D)\in \Z_{p^r}^{k \times n}[D]$ be an encoder of $\C$. The result follows from the fact that $\left[
       \begin{array}{c}
         p^iG(D) \\
         p^{i+1}G(D) \\
         \vdots \\
         p^{r-1}G(D) \\
       \end{array}
     \right]$ is an $p$-encoder of $\C$, since $G(D)$ is full row rank.
     \qed
\end{proof}

\begin{lemma}
\label{l_1}
Let $\mathcal{C}_1$ and $\mathcal{C}_2$ be two convolutional codes over $\ZZ((D))$. Then we have
\begin{equation*}
  p\mbox{-}dim(\mathcal{C}_1+\mathcal{C}_2)= p\mbox{-}dim\,\mathcal{C}_1+p\mbox{-}dim\,\mathcal{C}_2-p\mbox{-}dim(\mathcal{C}_1 \cap \mathcal{C}_2).
  \end{equation*}
If the sum is direct, we have
\begin{equation*}
  p\mbox{-}dim(\mathcal{C}_1 \oplus \mathcal{C}_2)= p\mbox{-}dim\,\mathcal{C}_1+p\mbox{-}dim\,\mathcal{C}_2.
  \end{equation*}
\end{lemma}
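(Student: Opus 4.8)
The plan is to reduce both identities to a single additivity property: if $0 \to A \stackrel{\iota}{\to} B \stackrel{\pi}{\to} C \to 0$ is a short exact sequence of convolutional codes over $\ZZ((D))$, then $p\mbox{-}dim\,B = p\mbox{-}dim\,A + p\mbox{-}dim\,C$. Both formulas in the lemma follow formally once this is available, so the real content is the additivity, which I would establish by explicitly building a $p$-basis of $B$ out of $p$-bases of $A$ and $C$.

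Concretely, I would choose a $p$-basis $(a_1,\dots,a_s)$ of $A$ and a $p$-basis $(c_1,\dots,c_t)$ of $C$, pick preimages $b_j\in B$ with $\pi(b_j)=c_j$, and claim that the ordered set $(b_1,\dots,b_t,\iota(a_1),\dots,\iota(a_s))$ is a $p$-basis of $B$. To check the $p$-generator-sequence property I would write $p\,c_j$ as a $p$-linear combination $\sum_{l>j}\alpha_l c_l$ with $\alpha_l\in\A[D]$, note that $p\,b_j-\sum_{l>j}\alpha_l b_l$ lies in $\ker\pi=\iota(A)$, and expand this correction term as a $p$-linear combination of the $\iota(a_i)$, using that $(a_1,\dots,a_s)$ $p$-spans $A$; this exhibits $p\,b_j$ as a $p$-linear combination of strictly later vectors, while the conditions on the $\iota(a_i)$ are obtained by applying $\iota$ to the corresponding relations in $A$. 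For $p$-linear independence I would apply $\pi$ to a vanishing $p$-linear combination, kill the coefficients of the $b_j$ by independence of the $c_j$, and then invoke injectivity of $\iota$ together with independence of the $a_i$. Finally the $p$-span equals the span (by the Lemma recalled earlier for $p$-generator sequences), and a preimage argument through $\pi$ shows this span is all of $B$.

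Granting the additivity, the second identity is the split case: applying it to $0\to\mathcal{C}_1\to\mathcal{C}_1\oplus\mathcal{C}_2\to\mathcal{C}_2\to 0$ gives $p\mbox{-}dim(\mathcal{C}_1\oplus\mathcal{C}_2)=p\mbox{-}dim\,\mathcal{C}_1+p\mbox{-}dim\,\mathcal{C}_2$. For the first identity I would apply the additivity to the sequence
$$0\to\mathcal{C}_1\cap\mathcal{C}_2\to\mathcal{C}_1\oplus\mathcal{C}_2\to\mathcal{C}_1+\mathcal{C}_2\to 0,$$
whose first map is $x\mapsto(x,-x)$ and whose second map is $(a,b)\mapsto a+b$; its exactness is the routine check that the kernel of $(a,b)\mapsto a+b$ equals $\{(x,-x):x\in\mathcal{C}_1\cap\mathcal{C}_2\}$. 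This yields $p\mbox{-}dim(\mathcal{C}_1\oplus\mathcal{C}_2)=p\mbox{-}dim(\mathcal{C}_1\cap\mathcal{C}_2)+p\mbox{-}dim(\mathcal{C}_1+\mathcal{C}_2)$, and substituting the direct-sum identity and rearranging gives the claimed inclusion–exclusion formula.

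I expect the main obstacle to be the $p$-generator-sequence verification for the lifted vectors $b_j$: since $p$-linear combinations allow only coefficients in $\A[D]$ rather than arbitrary elements of $\ZZD$, I must ensure that the correction term $p\,b_j-\sum_{l>j}\alpha_l b_l$ is genuinely a $p$-linear combination of the $\iota(a_i)$, not merely a module element of $\iota(A)$. This is exactly where the hypothesis that the $a_i$ form a $p$-basis is used, via the identity $p\mbox{-span}=\mbox{span}$ on $A$. A secondary point needing attention is that $\mathcal{C}_1+\mathcal{C}_2$ and $\mathcal{C}_1\cap\mathcal{C}_2$ are again convolutional codes, so that their $p$-dimensions are defined: the sum is immediate by stacking generator matrices, while the intersection I would handle by exhibiting it as a finitely generated submodule and applying the decomposition into free parts established above.
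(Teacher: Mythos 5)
Your proposal is correct, but it takes a genuinely different route from the paper's own proof. The paper proves the direct-sum identity exactly as your split case does (a $p$-basis of $\C_1$ followed by a $p$-basis of $\C_2$ is a $p$-basis of $\C_1\oplus\C_2$), and then settles the general case by choosing direct complements: a submodule $A$ with $\C_1=A\oplus(\C_1\cap\C_2)$ and a submodule $B$ with $\C_2=B\oplus(\C_1\cap\C_2)$, so that $\C_1+\C_2=A\oplus(\C_1\cap\C_2)\oplus B$ and the direct-sum case applies. You instead prove additivity of $p\mbox{-}dim$ over short exact sequences by lifting a $p$-basis of the quotient and appending a $p$-basis of the kernel --- your ordering (lifts first, kernel basis last) and your use of $p$-span $=$ span on the kernel to absorb the correction terms are exactly what make the $p$-generator-sequence check go through --- and then you specialize to the split sequence and to $0\to\C_1\cap\C_2\to\C_1\oplus\C_2\to\C_1+\C_2\to 0$. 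What your approach buys is substantial: $\ZZ((D))$ is not semisimple, so the complements the paper invokes need not exist; for instance $p\ZZ((D))$ has no direct complement in $\ZZ((D))$, so for $\C_1=\ZZ((D))$ and $\C_2=p\ZZ((D))$ the paper's decomposition is simply unavailable even though the formula holds (and your argument covers this case without change). This is the standard length-function argument, and its cost is only the longer lifting verification. Two minor points: for Laurent-series codes the $p$-linear coefficients should be taken in $\A((D))$ rather than $\A[D]$, matching the paper's own $p$-encoder definition; and both your proof and the paper's share the implicit assumption that $\C_1\cap\C_2$ and $\C_1+\C_2$ admit $p$-bases so that their $p$-dimensions are defined --- you at least flag this explicitly and sketch a remedy via finite generation and the free decomposition, which is no weaker than what the paper does.
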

\begin{proof} Suppose that $\C_1$ and $\C_2$ are in direct sum, i.e, $\C_1+\C_2=\C_1\oplus\C_2$.\\ If $B_1$ is a $p$-basis of $\C_1$ and $B_2$ is a $p$-basis of $\C_2$, then $(B_1,B_2)$ is a $p$-basis of $\C_1\oplus\C_2$ which gives the result.\\
For the general case, Let denote by $A$  the complement of $\C_1\cap\C_2$ in $\C_1$, i.e., $\C_1=A\oplus \C_1\cap\C_2$, and let $B$ such that $\C_2=B\oplus\C_1\cap\C_2.$  Then we have $$\C_1+\C_2=A\oplus\C_1\cap\C_2\oplus B$$ and the result is immediate.
\qed
\end{proof}

\noindent Next corollary follows immediately from Lemmas \ref{1_0} and \ref{l_1}.

\begin{corollary}
Let $\C$ be a convolutional code of length $n$ such that
$$\C=\C_0 \oplus p \C_1 \oplus \dots \oplus p^{r-1} \C_{r-1}$$
with $\C_i$ a free convolutional code with rank $k_i$, $i=0, 1, \dots, r-1$. Then $$p\mbox{-}dim(\C)=\sum_{i=0}^{r-1}(r-i)k_i.$$
\end{corollary}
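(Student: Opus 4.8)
The plan is to combine the additivity of $p$-dimension over direct sums (Lemma~\ref{l_1}) with the formula for the $p$-dimension of a scaled free code (Lemma~\ref{1_0}), so the corollary is essentially an immediate consequence of the two preceding results applied to the given decomposition $\C = \C_0 \oplus p\C_1 \oplus \dots \oplus p^{r-1}\C_{r-1}$.

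First I would invoke Lemma~\ref{l_1}. Strictly speaking that lemma is stated for two codes, so I would first note that it extends by a routine induction to any finite direct sum, giving
\begin{equation*}
p\mbox{-}dim(\C) = \sum_{i=0}^{r-1} p\mbox{-}dim(p^i \C_i).
\end{equation*}
The only point requiring a word of justification here is that the summands $p^i \C_i$ are genuinely in direct sum inside $\Z_{p^r}^n((D))$, which is exactly the hypothesis of the corollary; with that in hand the additivity applies termwise.

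Next I would apply Lemma~\ref{1_0} to each summand. Since each $\C_i$ is by hypothesis a free convolutional code of rank $k_i$, that lemma gives $p\mbox{-}dim(p^i \C_i) = (r-i)k_i$ for every $i = 0, 1, \dots, r-1$. Substituting this into the displayed sum yields
\begin{equation*}
p\mbox{-}dim(\C) = \sum_{i=0}^{r-1}(r-i)k_i,
\end{equation*}
which is the claimed identity.

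Honestly, there is no serious obstacle here; the statement is billed as an immediate corollary and the two lemmas do all the work. The only care needed is bookkeeping: making sure the generalization of Lemma~\ref{l_1} from two summands to $r$ summands is stated (or silently accepted as a trivial induction), and confirming that the freeness and rank hypotheses on each $\C_i$ line up exactly with what Lemma~\ref{1_0} requires. Both are already guaranteed by the decomposition established earlier in the section, so the proof is little more than a substitution.
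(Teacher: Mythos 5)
Your proof is correct and matches the paper exactly: the paper states this corollary as following immediately from Lemmas~\ref{1_0} and~\ref{l_1}, which is precisely your argument of applying the additivity of $p$-dimension over direct sums (extended inductively to $r$ summands) together with $p\mbox{-}dim(p^i\C_i)=(r-i)k_i$ for each free summand. No gaps; your note about the routine induction from two summands to $r$ is the only detail the paper leaves implicit as well.
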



\section{Dual Codes}
\label{sec:3}
Let $\mathcal{C}$ be a convolutional code of length $n$ over $\ZZ((D))$. The {\bf orthogonal} of $\mathcal{C}$, denoted by $\mathcal{C}^{\perp}$, is defined as
\begin{equation*}
  \mathcal{C}^{\perp}=\{y \in \ZZ^n:[y,x]=0 \mbox{ for all } x \in \mathcal{C}\},
  \end{equation*}
where $[y,x]$ denotes the inner product over $\ZZ^n$.

\bigskip

\noindent In this section we will show that the dual of a convolutional code is still a convolutional code. The next theorem proves this statement for free convolutional codes.

\begin{theorem}\label{t_1}
Let $\C$ be a free convolutional code with length $n$ over $\Z_{p^r}((D))$ and rank  $\tilde k$. Then $\mathcal{C}^{\perp}$ is also a free convolutional code of length $n$ and rank  $n-\tilde k$.
\end{theorem}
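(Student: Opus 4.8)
The plan is to exploit the fact that the coefficient ring $R=\ZZ((D))$ is a commutative chain ring. Writing any Laurent series as $D^{m}(a_0+a_1D+\cdots)$ and noting that $p\,D^{-1}$ is nilpotent, one checks that $R$ is local with maximal ideal $pR$, residue field $\F_p((D))$, and ideal chain $0=p^rR\subset p^{r-1}R\subset\cdots\subset pR\subset R$. The two consequences I would record first are: (i) a square matrix over $R$ is invertible if and only if its determinant is a unit, equivalently if and only if its reduction modulo $p$ is invertible over $\F_p((D))$; and (ii) a matrix over $R$ is full row rank (its rows are $R$-linearly independent) if and only if its reduction modulo $p$ has full row rank over $\F_p((D))$. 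For (ii), if the reduction dropped rank there would be a combination $\sum c_i G_i\in pR^n$ with some $c_i$ a unit (lifting a nontrivial $\F_p((D))$-dependence and using that non-units are exactly $pR$); multiplying by $p^{r-1}$ would then produce a nontrivial $R$-dependence $\sum p^{r-1}c_i G_i=0$, a contradiction.

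First I would bring an encoder into a standard shape. Let $G(D)\in\Z_{p^r}^{\tilde k\times n}[D]$ be a full row rank encoder of $\C$. By (ii) its reduction $\overline{G(D)}$ has rank $\tilde k$ over $\F_p((D))$, so some $\tilde k$ columns are independent; after permuting the $n$ coordinates (which permutes $\C^{\perp}$ and preserves both freeness and rank, so the conclusion transfers back) I may assume these are the first $\tilde k$. Let $G_1(D)$ be the corresponding $\tilde k\times\tilde k$ submatrix; by (i) it is invertible over $R$, and since $\det G_1(D)$ is a polynomial whose lowest coefficient is a unit, $G_1(D)^{-1}\in\Z_{p^r}(D)^{\tilde k\times\tilde k}$. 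Left multiplication by the invertible matrix $G_1(D)^{-1}$ does not change the row space, so $\C$ is generated by $[\,I_{\tilde k}\mid B(D)\,]$ for some $B(D)\in\Z_{p^r}(D)^{\tilde k\times(n-\tilde k)}$.

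With this encoder the dual is computed directly. Writing $y=(y_1,y_2)$ with $y_1\in\Z_{p^r}^{\tilde k}((D))$ and $y_2\in\Z_{p^r}^{n-\tilde k}((D))$, an arbitrary codeword is $u\,[\,I_{\tilde k}\mid B(D)\,]=(u,uB(D))$, so $[y,x]=(y_1+y_2B(D)^{\top})u^{\top}$ for every $u$. Letting $u$ range over the standard basis shows $y\in\C^{\perp}$ if and only if $y_1+y_2B(D)^{\top}=0$, i.e. $y=y_2\,[\,-B(D)^{\top}\mid I_{n-\tilde k}\,]$. Hence $\C^{\perp}=\mbox{Im}_{\ZZ((D))}H(D)$ with $H(D)=[\,-B(D)^{\top}\mid I_{n-\tilde k}\,]$, a full row rank rational matrix; by Lemma~\ref{rational encoder}, $\C^{\perp}$ is a convolutional code, and being the image of a full row rank matrix it is free of rank $n-\tilde k$.

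I expect the main obstacle to be the second step, namely the passage from ``full row rank'' to the existence of an invertible $\tilde k\times\tilde k$ submatrix. This is precisely where the chain-ring structure of $\ZZ((D))$ is essential: over a general ring a free submodule need not be a direct summand, and it is this structure that makes a free code a direct summand of $\Z_{p^r}^n((D))$ and thereby produces the clean complementary encoder $H(D)$. A secondary point to verify carefully is that $G_1(D)^{-1}$, and hence $B(D)$ and $H(D)$, have entries in $\Z_{p^r}(D)$, so that Lemma~\ref{rational encoder} applies and yields both a genuine polynomial generator matrix and the freeness and rank statements.
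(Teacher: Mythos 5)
Your route is, at bottom, the same as the paper's: both proofs manufacture a full row rank \emph{rational} generator matrix for $\C^{\perp}$ and then invoke Lemma~\ref{rational encoder}. The paper completes $G(D)$ to a nonsingular $n\times n$ polynomial matrix $\left[\begin{smallmatrix} G(D)\\ L(D)\end{smallmatrix}\right]$ and takes as dual generator the transpose of the last $n-\tilde k$ columns of its inverse; you instead extract an invertible $\tilde k\times\tilde k$ submatrix, pass to the systematic encoder $[\,I_{\tilde k}\mid B(D)\,]$, and write the dual generator $[\,-B(D)^{\top}\mid I_{n-\tilde k}\,]$ explicitly. These are the same construction in disguise: your encoder completes to $\left[\begin{smallmatrix} I & B(D)\\ 0 & I\end{smallmatrix}\right]$, whose inverse is $\left[\begin{smallmatrix} I & -B(D)\\ 0 & I\end{smallmatrix}\right]$, and the paper's recipe applied to it returns exactly your $H(D)$. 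A genuine merit of your write-up is that the chain-ring discussion (reduction modulo $p$, units $=$ complement of $p\ZZ((D))$) justifies the existence of the invertible submatrix, a point the paper leaves implicit when it asserts the existence of the completion $L(D)$.

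There is, however, one false step: the claim that $\det G_1(D)$ is a polynomial \emph{whose lowest coefficient is a unit}. This can fail. Over $\Z_{p^2}$, take $G(D)=[\,p+D \quad p+D\,]$: it is full row rank (since $p+D$ is a unit of $\ZZ((D))$, as $\overline{p+D}=D\neq 0$), yet every $1\times 1$ minor equals $p+D$, whose lowest coefficient is $p$. So you cannot conclude membership in $\Z_{p^r}(D)$ this way, and without it Lemma~\ref{rational encoder} does not apply to $H(D)$. The statement you actually need is: \emph{every polynomial $q(D)$ with $\overline{q(D)}\neq 0$ is invertible in $\Z_{p^r}(D)$}. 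This is true but requires an argument; for instance, write $q(D)=a(D)+D^{m}b(D)$ where $m$ is the first index at which $q$ has a unit coefficient, so that $a(D)\in p\Z_{p^r}[D]$ has degree less than $m$ and $b(D)$ has unit constant term, and check the telescoping identity $q(D)\cdot\sum_{j=0}^{r-1}(-1)^{j}a(D)^{j}b(D)^{r-1-j}D^{(r-1-j)m}=b(D)^{r}D^{rm}$, the term $(-1)^{r-1}a(D)^{r}$ vanishing because $a(D)^{r}\in p^{r}\Z_{p^r}[D]=0$; since $b(D)^{r}D^{rm}$ does have a unit lowest coefficient, $1/q(D)\in\Z_{p^r}(D)$. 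With this lemma, $G_1(D)^{-1}=\det(G_1(D))^{-1}\mathrm{adj}(G_1(D))$ has entries in $\Z_{p^r}(D)$, your $B(D)$ and $H(D)$ are rational in the paper's sense, and the rest of your argument goes through. (In fairness, the paper's proof silently assumes the same fact when it declares the inverse $[X(D)\;Y(D)]$ of the completed polynomial matrix to be rational, so this gap is shared with, not introduced relative to, the published argument.)
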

\begin{proof}
Let $G(D) \in \Z_{p^r}^{ \tilde k \times n}$ be an encoder of $\C$. Since $G(D)$ is full row rank there exists a polynomial matrix  $L(D) \in \Z_{p^r}^{(n-\tilde k) \times n}[D]$ such that $\left[\begin{array}{c} G(D) \\ L(D) \end{array} \right]$ is nonsingular. Let $[X(D) \; Y(D)]$, with $X(D) \in \Z_{p^r}^{n \times \tilde k}(D)$ and $Y(D) \in \Z_{p^r}^{n \times (n- \tilde k)}(D)$, be the inverse of $\left[\begin{array}{c} G(D) \\ L(D) \end{array} \right]$. Then $\mathcal{C}^{\perp}=\mbox{Im}_{\Z_{p^r}((D))} Y(D)^t$, which means by Lemma \ref{rational encoder} that $\mathcal{C}^{\perp}$ is a convolutional code. Moreover, since $Y(D)$ is full column rank, there exists a full row rank polynomial matrix $G^{\perp}(D) \in  \Z_{p^r}^{(n- \tilde k) \times n}[D]$ such that $\C^{\perp}=\mbox{Im}_{\Z_{p^r}((D))} G^{\perp}(D)$. Thus $\C^{\perp}$ is a free convolutional code or rank $n-\tilde k$.
\qed
\end{proof}
If $\C$ is a free code of rank $\tilde k$, then $p\mbox{-}dim(\C)=\tilde{k}r$. This gives us the next corollary.
\begin{corollary}\label{c21}
Let $\C$ be a free convolutional code of length $n$ over $\Z_{p^r}$. Then we have $$p\mbox{-}dim(\C)+p\mbox{-}dim(\C^\bot)=nr.$$
\end{corollary}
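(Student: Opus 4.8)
The plan is to deduce Corollary \ref{c21} directly from Theorem \ref{t_1} together with the elementary computation of the $p$-dimension of a free code. First I would recall that Theorem \ref{t_1} already establishes that $\C^\perp$ is a free convolutional code of length $n$ and rank $n-\tilde k$. So the entire content of the corollary is the additivity of $p$-dimensions of $\C$ and $\C^\perp$ under the constraint that their ranks sum to $n$.

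The key observation, stated just before the corollary, is that a free convolutional code of rank $m$ has $p$-dimension exactly $mr$. This follows from Lemma \ref{1_0} with $i=0$: if $\C$ is free of rank $k$ then $p\mbox{-}dim(\C)=p\mbox{-}dim(p^0\C)=(r-0)k=rk$. Concretely, an encoder $G(D)$ of a free code is full row rank, and stacking $G(D), pG(D), \dots, p^{r-1}G(D)$ yields a $p$-encoder; since there are $r$ blocks each contributing $k$ rows, the $p$-dimension is $rk$.

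With this in hand the proof is a one-line arithmetic combination. I would apply the rank-$p$-dimension formula to both codes: since $\C$ is free of rank $\tilde k$ we get $p\mbox{-}dim(\C)=\tilde k r$, and since Theorem \ref{t_1} gives that $\C^\perp$ is free of rank $n-\tilde k$ we get $p\mbox{-}dim(\C^\perp)=(n-\tilde k)r$. Adding these,
$$
p\mbox{-}dim(\C)+p\mbox{-}dim(\C^\perp)=\tilde k r+(n-\tilde k)r=nr,
$$
which is exactly the claimed identity.

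I do not anticipate a genuine obstacle here, since all the substantive work has been front-loaded into Theorem \ref{t_1} (freeness and rank of the dual) and Lemma \ref{1_0} (the $p$-dimension of a free code). The only point requiring a moment of care is ensuring that both $\C$ and $\C^\perp$ are \emph{free}, so that the clean formula $p\mbox{-}dim=r\cdot\mathrm{rank}$ applies to each; this is precisely what Theorem \ref{t_1} guarantees for the dual, and is a hypothesis for $\C$ itself. Everything else is the trivial cancellation $\tilde k r + (n-\tilde k)r = nr$.
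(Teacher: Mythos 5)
Your proof is correct and follows exactly the paper's own route: the paper derives the corollary from Theorem \ref{t_1} together with the observation, stated just before the corollary, that a free code of rank $\tilde k$ has $p$-dimension $\tilde k r$ (which is Lemma \ref{1_0} with $i=0$). Your only addition is spelling out this one-line arithmetic explicitly, which the paper leaves implicit.
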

\noindent In the sequel of this work we propose to established this result for any code over $\Z_{p^r}((D))$.\\

\noindent The following auxiliary lemmas will be fundamental in the proof of next theorem. They were proved in \cite{Mo15} for block codes over $\ZZ$ and are trivially extended for convolutional codes $\ZZ$((D)). We write their proofs for completeness.

\begin{lemma} \cite{Mo15}
Let $\C$ be a free convolutional code over $\Z_{p^r}((D))$. For any given integer $i \in \{0, \dots r-1\}$ we have
\begin{equation*}
\C \cap p^i \ZZ^n((D))=p^i \C.
\end{equation*}
\end{lemma}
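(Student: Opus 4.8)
# Proof Proposal

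The statement to prove is that for a free convolutional code $\C$ over $\Z_{p^r}((D))$ and any $i \in \{0, \dots, r-1\}$, we have $\C \cap p^i \ZZ^n((D)) = p^i \C$. The plan is to prove the two inclusions separately; the inclusion $p^i \C \subseteq \C \cap p^i \ZZ^n((D))$ is immediate, since any element of $p^i\C$ is both in $\C$ and manifestly of the form $p^i z$ with $z \in \ZZ^n((D))$. The content of the lemma lies in the reverse inclusion.

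For the reverse inclusion, I would exploit that $\C$ is free. Let $G(D) \in \Z_{p^r}^{\tilde k \times n}[D]$ be an encoder of $\C$, so $G(D)$ is full row rank. Take an arbitrary element $w(D) \in \C \cap p^i \ZZ^n((D))$. Since $w(D) \in \C$, we may write $w(D) = u(D) G(D)$ for some $u(D) \in \Z_{p^r}^{\tilde k}((D))$; since $w(D) \in p^i \ZZ^n((D))$, every coordinate of $w(D)$ is divisible by $p^i$. The goal is to conclude that $u(D)$ itself is divisible by $p^i$, i.e., $u(D) = p^i \tilde u(D)$ for some $\tilde u(D) \in \Z_{p^r}^{\tilde k}((D))$, which would give $w(D) = p^i \tilde u(D) G(D) \in p^i \C$.

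The key step, and the main obstacle, is passing from ``$u(D)G(D)$ is divisible by $p^i$'' to ``$u(D)$ is divisible by $p^i$.'' Over a finite field this is just full-rank cancellation, but over $\ZZ$ one must handle zero divisors carefully. The natural approach is to work with the $p$-adic valuation of $u(D)$: suppose $u(D)$ is divisible by exactly $p^j$ with $j < i$, and write $u(D) = p^j u'(D)$ where $u'(D)$ has at least one coordinate not divisible by $p$. Reducing the identity $p^j u'(D) G(D) = w(D) \equiv 0 \pmod{p^{j+1}}$ modulo $p^{j+1}$ yields $p^j\,\overline{u'(D)}\,\overline{G(D)} = 0$ in the appropriate quotient, which forces $\overline{u'(D)}\,\overline{G(D)} = 0$ over $\Z_p((D))$. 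Here I would invoke that the reduction $\overline{G(D)}$ of a full-row-rank encoder of a free code remains full row rank over the field $\Z_p((D))$ (this is implicit in the structure theory developed earlier, where freeness of $\C$ corresponds to $k_1 = \dots = k_{r-1} = 0$), so that $\overline{u'(D)} = 0$, contradicting the choice of $u'(D)$. Hence $j \geq i$, completing the proof. The delicate point to verify is precisely that the mod-$p$ reduction of the encoder of a \emph{free} code has full row rank over $\Z_p((D))$; once that is established the valuation argument closes cleanly.
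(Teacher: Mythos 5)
Your strategy is sound and is, at bottom, the same as the paper's: both proofs express the element of $\C \cap p^i\ZZ^n((D))$ in terms of a free generating set and use linear independence of the mod-$p$ reductions of the generators over the field $\Z_p((D))$ to force $p$-divisibility of the coefficients. The paper does this with a basis $\{x_1(D),\dots,x_k(D)\}$ chosen so that its projection is a basis of $\overline{\C}$, and then iterates the step ``reduce mod $p$, divide by $p$'' a total of $i$ times; you work with an arbitrary encoder $G(D)$ and run a one-shot contradiction on the exact power of $p$ dividing the message vector $u(D)$. Your variant is fine, and in fact avoids a small verification that the paper's iteration uses silently (namely that $p\,x(D) \in p^i\ZZ^n((D))$ forces $x(D) \in p^{i-1}\ZZ^n((D))$, which holds because $i-1\le r-1$).

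The one point you must actually prove, as you yourself flag, is that $\overline{G(D)}$ has full row rank over $\Z_p((D))$. Appealing to ``the structure theory developed earlier'' is not sufficient as stated: the decomposition procedure produces \emph{some} generator matrix whose projection has full row rank, not a statement about \emph{every} encoder of a free code. Fortunately the claim has a two-line direct proof. If $\overline{u'(D)}\,\overline{G(D)} = 0$, then $u'(D)G(D) = p\,v(D)$ for some $v(D) \in \ZZ^n((D))$, hence $p^{r-1}u'(D)G(D) = p^r v(D) = 0$; since $G(D)$ has full row rank over $\Z_{p^r}((D))$, this gives $p^{r-1}u'(D) = 0$, i.e.\ every coefficient of $u'(D)$ lies in $p\Z_{p^r}$, i.e.\ $\overline{u'(D)} = 0$. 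Inserting this observation, your proof is complete. (For what it is worth, the paper's own proof assumes the analogous fact by fiat --- it posits a basis of $\C$ whose projection is a basis of $\overline{\C}$ --- so the same observation is needed to make either write-up fully self-contained.)
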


\begin{proof} The inclusion $p^i \C\subset \C\cap p^i\ZZ^n((D))$ is trivial. Conversely, let $y(D) \in p^i\ZZ^n((D))\cap \C$.
Let $\{ x_1(D),\ldots,x_k(D) \}$ be a basis of $\C$ and its projection over $\Z_p((D))$ $\{ \overline{x_1(D)},\ldots,\overline{x_k(D)}\} $ be  a basis of $\overline{\C}$. Then, there exists $a_1(D),\ldots,a_k(D) \in \ZZ((D))$ such that $y(D)=\displaystyle\sum_{j=1}^ka_j(D) x_j(D)$. As $y(D) \in p^i\ZZ^n((D))$, we have $\overline{y(D)}=\displaystyle\sum_{j=1}^k\overline{a_j(D)}\overline{x_j(D)}=0$, and therefore $\overline{a_j(D)}=0$, $j=1\ldots k$. Then, for all $j=1\ldots k$, $a_j(D)$ can be written in the form $pb_j(D)$ where $b_j(D) \in \ZZ((D))$. By repeating the procedure $i$ times, we obtain $a_j(D)=p^i\alpha_j(D),\;\forall j=1\ldots k$, which gives  $$y(D)=p^i\sum_{j=1}^k\alpha_j(D)x_j(D)\in p^i\C.$$
\qed
\end{proof}

\begin{lemma}\label{l4}\cite{Mo15}
Suppose that $\C$ is a free code. Let $y(D) \in \ZZ((D))^n$  and let $i$ be an  integer in  $ \{0,\ldots, r-1\},$ such that $p^i y(D) \in \C$. Then $ y(D) \in \C+p^{r-i}\ZZ^n((D)).$
\end{lemma}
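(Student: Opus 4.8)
The plan is to reduce the statement to the immediately preceding lemma, which identifies $\C\cap p^i\ZZ^n((D))$ with $p^i\C$ for a free code $\C$. The key observation is that the hypothesis $p^iy(D)\in\C$, together with the trivial membership $p^iy(D)\in p^i\ZZ^n((D))$ (valid since $y(D)\in\ZZ^n((D))$), places $p^iy(D)$ in the intersection $\C\cap p^i\ZZ^n((D))$.

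First I would invoke the previous lemma to conclude that $p^iy(D)\in p^i\C$, so that there exists $z(D)\in\C$ with $p^iy(D)=p^iz(D)$, that is, $p^i\bigl(y(D)-z(D)\bigr)=0$ in $\ZZ^n((D))$. The membership we want will then follow once we control the solutions of this annihilation equation.

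Next I would analyze the annihilator of $p^i$ over $\ZZ=\Z_{p^r}$: an element $a\in\Z_{p^r}$ satisfies $p^ia=0$ precisely when $a\in p^{r-i}\Z_{p^r}$, since writing $a=p^ju$ with $u$ a unit gives $p^ia=p^{i+j}u$, which vanishes exactly when $i+j\ge r$. Applying this coefficientwise to the Laurent series $y(D)-z(D)$ shows that $y(D)-z(D)\in p^{r-i}\ZZ^n((D))$. Writing $y(D)=z(D)+\bigl(y(D)-z(D)\bigr)$ then yields $y(D)\in\C+p^{r-i}\ZZ^n((D))$, as desired.

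The only delicate point is the annihilator computation: one must verify that $p^ia=0$ forces $a\in p^{r-i}\Z_{p^r}$, and that this conclusion passes to each coefficient of the Laurent series without obstruction. This is routine once the $p$-adic expansion of elements of $\Z_{p^r}$ recalled in Section~\ref{sec:2} is in hand, so I expect no serious obstacle here; the substance of the argument is carried entirely by the preceding intersection lemma.
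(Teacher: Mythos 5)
Your proof is correct, and it takes a genuinely shorter route than the paper's. Both proofs open identically: since $p^iy(D)\in\C$ and trivially $p^iy(D)\in p^i\ZZ^n((D))$, the preceding intersection lemma gives $p^iy(D)\in p^i\C$, i.e.\ $p^iy(D)=p^iz(D)$ for some $z(D)\in\C$. From there the paper proceeds iteratively: it extracts only the mod-$p$ consequence $\overline{y(D)}=\overline{z(D)}$, writes $y(D)=y_1(D)+py_2(D)$ with $y_1(D)\in\C$, observes that $p^{i+1}y_2(D)\in\C$, and repeats this lifting step $r-i$ times (implicitly re-invoking the intersection lemma at each stage) until the remainder carries the factor $p^{r-i}$. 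You bypass the iteration entirely with the annihilator computation: $p^i\bigl(y(D)-z(D)\bigr)=0$, and $\mathrm{Ann}_{\Z_{p^r}}(p^i)=p^{r-i}\Z_{p^r}$ --- your argument (write $a=p^ju$ with $u$ a unit, so $p^ia=p^{i+j}u$ vanishes exactly when $i+j\ge r$) is exactly right, and it does pass to Laurent series coefficientwise, since multiplication by the scalar $p^i$ acts on each coefficient separately and one can choose the quotients $v_t$ with $w_t=p^{r-i}v_t$ to be zero whenever $w_t=0$, preserving left-compactness. Hence $y(D)-z(D)\in p^{r-i}\ZZ^n((D))$ in a single step. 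Your approach buys brevity and makes the underlying ring-theoretic fact explicit; the paper's approach only ever reasons modulo $p$ and so never names that fact, but at the cost of an $(r-i)$-fold induction whose bookkeeping is left to the reader. Both arguments rest on the same key lemma, so nothing is lost in generality by your simplification.
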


\begin{proof}
By the preceding lemma, there exists $x(D) \in\C$ such that $p^i y(D)=p^i x(D)$. This implies that $\overline{y(D)}=\overline{x(D)}$. Thus there exists $y_1(D) \in\C$, $y_2(D) \in\ZZ((D))$ satisfying  $y(D)=y_1(D)+p y_2(D).$ We have $p^i y(D)=p^i y_1(D)+p^{i+1}y_2(D)$, then $p^i y(D)-p^i y_1(D)=p^{i+1}y_2(D)\in \C$. Then $y_2(D)=y_3(D)+p y_4(D)$ where  $y_3(D)\in \C$ and $y_4(D)\in\ZZ^n((D))$. Then $y(D)=\underbrace{y_1(D)+py_3(D)}_{\in \C}+p^2y_4(D).$
By repeating this procedure $r-i$ times, we obtain $y(D)=x_1(D)+p^{r-i}x_2(D)$ with $x_1(D)\in \C.$
\qed
\end{proof}

\begin{lemma} \cite{Mo15}
Let $\C$ be a free convolutional code over $\Z_{p^r}((D))$. For all integer $i \in \{0, \dots r-1\}$ we have
\begin{equation*}
(p^i \C)^{\perp}=\C^{\perp} + p^{r-i} \ZZ^n((D)).
\end{equation*}
\end{lemma}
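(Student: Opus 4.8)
The plan is to prove the two inclusions separately, relying on the bilinearity of the inner product, the fact that $p^r=0$ in $\ZZ$, and the two preceding lemmas.

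The inclusion $\C^{\perp}+p^{r-i}\ZZ^n((D)) \subseteq (p^i\C)^{\perp}$ is the easy direction. First I would take an arbitrary element $y(D)=c(D)+p^{r-i}z(D)$ with $c(D)\in\C^{\perp}$ and $z(D)\in\ZZ^n((D))$, and pair it against a generic element $p^i x(D)$ of $p^i\C$, where $x(D)\in\C$. By bilinearity, $[y(D),p^i x(D)]=p^i[c(D),x(D)]+p^r[z(D),x(D)]$; the first term vanishes because $c(D)\in\C^{\perp}$, and the second vanishes because $p^r=0$. Hence $y(D)\in(p^i\C)^{\perp}$.

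For the reverse inclusion the crucial observation is a reformulation of membership in $(p^i\C)^{\perp}$. A vector $y(D)$ lies in $(p^i\C)^{\perp}$ exactly when $[y(D),p^i x(D)]=p^i[y(D),x(D)]=[p^i y(D),x(D)]=0$ for every $x(D)\in\C$, that is, exactly when $p^i y(D)\in\C^{\perp}$. So I would first record the equivalence that $y(D)\in(p^i\C)^{\perp}$ holds if and only if $p^i y(D)\in\C^{\perp}$.

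The main point, and the step that makes the whole argument work, is to apply Lemma \ref{l4} not to $\C$ itself but to its dual $\C^{\perp}$. By Theorem \ref{t_1}, since $\C$ is free the code $\C^{\perp}$ is again a free convolutional code, so the hypotheses of Lemma \ref{l4} are met for $\C^{\perp}$. Given $y(D)\in(p^i\C)^{\perp}$, the equivalence above yields $p^i y(D)\in\C^{\perp}$, and Lemma \ref{l4} applied to the free code $\C^{\perp}$ then delivers $y(D)\in\C^{\perp}+p^{r-i}\ZZ^n((D))$, which is precisely the desired inclusion. The only obstacle to watch for is making sure that Lemma \ref{l4} is legitimately invoked for $\C^{\perp}$ rather than $\C$, namely that $\C^{\perp}$ is free; this freeness is exactly what Theorem \ref{t_1} supplies, so no extra work is needed.
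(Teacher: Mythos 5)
Your proposal is correct and follows essentially the same route as the paper: the easy inclusion via bilinearity and $p^r=0$, then the observation that $y(D)\in(p^i\C)^{\perp}$ forces $p^i y(D)\in\C^{\perp}$, and finally Lemma \ref{l4} applied to $\C^{\perp}$, whose freeness is guaranteed by Theorem \ref{t_1}. Your write-up is in fact slightly more explicit than the paper's, which leaves the first inclusion as "clear" and cites the freeness of $\C^{\perp}$ without naming Theorem \ref{t_1}.
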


\begin{proof}
 It's clear that $\C^\bot+p^{r-i}\ZZ^n((D))\subset(p^i\C)^\bot$. Conversely, let $y(D) \in (p^i\C)^\bot$. Then for all $x(D) \in \C$ we have $[y(D),p^i x(D)]=[p^iy(D),x(D)]=0,$ thus $p^i y(D) \in \C^\bot$. As $\C^\bot$ is a free code, we conclude by  Lemma~\ref{l4} that $y(D) \in \C^\bot+p^{r-i}\ZZ^n((D))$.
 \qed
 \end{proof}

\begin{theorem}
Let $\C=\C_0\oplus p\C_1\oplus\ldots\oplus p^{r-1}\C_{r-1}$ be a convolutional code of length $n$ over $\Z_{p^r}((D))$, such that $\C_i$ is free, $i=0, 1, \dots, r-1$, with $\C_0\oplus \C_1\oplus\ldots\oplus \C_{r-1}= \C_0 + \C_1 + \ldots + \C_{r-1}$ a free convolutional code. Then, there exists a family of free convolutional codes of length $n$ over $\Z_{p^r}((D))$, $B_i,i=0,\ldots,r-1,$ such that
 $\C^\bot= B_0\oplus p B_1\oplus\ldots\oplus p^{r-1}B_{r-1},$  and
 \begin{enumerate}
 \item  $B_0=(\C_0\oplus\ldots\oplus \C_{r-1})^\bot.$
 \item For $i\in\{1,\ldots,r-1\},$ $rank(B_i)=rank(\C_{r-i})$.
 \end{enumerate}
\end{theorem}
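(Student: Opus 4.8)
The plan is to reduce the whole statement to a single rational change of coordinates built from an encoder of the free code $E:=\C_0\oplus\cdots\oplus\C_{r-1}$, and then to read off the decomposition from an explicit pointwise description of $\C^\perp$. First I would choose, for each $i$, a full row rank encoder $G_i\in\Z_{p^r}^{k_i\times n}[D]$ of $\C_i$ (with $k_i=\mathrm{rank}(\C_i)$) and stack them into $G=[\,G_0^t\;\cdots\;G_{r-1}^t\,]^t$. Since the sum defining $E$ is direct and $E$ is free of rank $K:=\sum_i k_i$, the matrix $G$ is full row rank and is an encoder of $E$, while $PG$ with $P=\mathrm{diag}(I_{k_0},pI_{k_1},\ldots,p^{r-1}I_{k_{r-1}})$ is a generator matrix of $\C$. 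Exactly as in the proof of Theorem~\ref{t_1}, I would complete $G$ to a nonsingular $T=[\,G^t\;L^t\,]^t$ with $L\in\Z_{p^r}^{(n-K)\times n}[D]$, and write $T^{-1}=[\,X\;\,Y\,]$, where $X\in\Z_{p^r}^{n\times K}(D)$ splits columnwise as $X=[\,X_0\;\cdots\;X_{r-1}\,]$ with $X_i$ of size $n\times k_i$, and $Y\in\Z_{p^r}^{n\times(n-K)}(D)$.

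Next I would establish the key description of the dual. Writing a generic codeword of $\C$ as $\sum_i p^i v_i G_i$ with $v_i\in\Z_{p^r}^{k_i}((D))$ arbitrary, the expansion $[y,\sum_i p^i v_i G_i]=\sum_i p^i (yG_i^t)v_i^t$ together with the independence of the blocks $v_i$ shows that
\begin{equation*}
y\in\C^\perp \iff yG_i^t\in p^{r-i}\Z_{p^r}^{k_i}((D))\ \text{ for every } i=0,\ldots,r-1,
\end{equation*}
where I use that the annihilator of $p^i$ in $\Z_{p^r}$ is $p^{r-i}\Z_{p^r}$. In particular the $i=0$ condition forces $yG_0^t=0$.

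The heart of the argument is the change of coordinates $\phi(y)=yT^t=[\,yG^t\mid yL^t\,]$. This is a bijective $\Z_{p^r}((D))$-module homomorphism of $\Z_{p^r}^n((D))$, since $T$ is polynomial and $T^{-1}$ rational (the reasoning of Lemma~\ref{rational encoder} guarantees $\phi$ maps $\Z_{p^r}^n((D))$ onto itself both ways). Under $\phi$, the description above says that $\C^\perp$ corresponds exactly to the diagonal submodule
\begin{equation*}
S=\{0\}^{k_0}\times p^{r-1}\Z_{p^r}^{k_1}((D))\times\cdots\times p^{1}\Z_{p^r}^{k_{r-1}}((D))\times\Z_{p^r}^{n-K}((D)),
\end{equation*}
which is the manifestly direct sum of its coordinate blocks. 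Applying $\phi^{-1}$, i.e.\ $y=\sum_i w_iX_i^t+w'Y^t$, sends the free $w'$-block to $B_0:=\mathrm{Im}\,Y^t$ and the $w_i$-block to $p^{r-i}\,\mathrm{Im}\,X_i^t$. Setting $j=r-i$ and $B_j:=\mathrm{Im}\,X_{r-j}^t$ for $j=1,\ldots,r-1$ (and recalling $B_0=E^\perp$ from Theorem~\ref{t_1}), this yields $\C^\perp=B_0\oplus pB_1\oplus\cdots\oplus p^{r-1}B_{r-1}$ with the claimed ranks, since each $X_{r-j}$ has full column rank $k_{r-j}=\mathrm{rank}(\C_{r-j})$ as a block of the invertible matrix $T^{-1}$.

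The step I expect to be the main obstacle is the coordinate change itself: one must verify carefully that $\phi$ is a genuine bijection of $\Z_{p^r}^n((D))$ even though $T^{-1}$ is only rational, and that the direct-sum decomposition of $S$ into coordinate blocks descends through $\phi^{-1}$ to an \emph{honest} direct sum of \emph{free} submodules $B_i$ of $\Z_{p^r}^n((D))$. This is precisely where the zero-divisor phenomena that make the ring case delicate must be controlled, and where the freeness of each $B_i$ — obtained from the full column rank of the blocks of $T^{-1}$ — is essential.
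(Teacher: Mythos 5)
Your proof is correct, and it takes a genuinely different route from the paper's. The paper proceeds by induction on the blocks: using the lemma $(p^i\C)^\perp=\C^\perp+p^{r-i}\Z_{p^r}^n((D))$ for free $\C$, it computes $(\C_0\oplus p\C_1)^\perp=(\C_0\oplus\C_1)^\perp+p^{r-1}\C_0^\perp$, invokes Theorem~\ref{t_1} to split off a free complement $B_{r-1}$, determines $\mathrm{rank}(B_{r-1})$ indirectly by computing the $p$-dimension of both sides in two ways (via Lemma~\ref{l_1} and Corollary~\ref{c21}), and then repeats this $r-1$ times. You instead prove everything in one pass by a coordinate change: stacking encoders of the $\C_i$ into an encoder $G$ of the free code $\C_0\oplus\cdots\oplus\C_{r-1}$, completing it to a nonsingular polynomial matrix $T$ as in the proof of Theorem~\ref{t_1}, characterizing $\C^\perp$ by the annihilator conditions $yG_i^t\in p^{r-i}\Z_{p^r}^{k_i}((D))$, and pulling back the evident block decomposition of the resulting diagonal module through $\phi^{-1}$. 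What your approach buys: explicit rational generator matrices for every $B_i$ (the blocks $X_{r-i}^t$ of $T^{-1}$, and $Y^t$ for $B_0$), all ranks read off at once from invertibility of $T^{-1}$ (any subset of the columns of an invertible matrix is part of a basis, hence linearly independent, so Lemma~\ref{rational encoder} yields freeness and rank of each $B_i$ even in the presence of zero divisors), and no need for the intersection identity $\C_0^\perp\cap(\C_1^\perp+p^{r-1}\Z_{p^r}^n((D)))=\C_0^\perp\cap\C_1^\perp+p^{r-1}\C_0^\perp$ that the paper's induction asserts without proof, nor for the $p$-dimension bookkeeping. What the paper's approach buys: it is coordinate-free, stays at the level of codes and their sums and intersections, and reuses the duality lemmas for free codes, which are of independent interest. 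Both arguments ultimately rest on the same unproved assertion inherited from Theorem~\ref{t_1}, namely that a full row rank polynomial matrix over $\Z_{p^r}[D]$ can be completed to a square nonsingular matrix with rational inverse, so your proof is at the same level of rigor as the paper's.
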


\begin{proof}
Suppose that $rank(\C_i)=k_i$ for $i=0,\ldots,r-1$.  We first begin by looking for  the dual of  $\C_0\oplus p\C_1$.
 \begin{eqnarray*}
 (\C_0\oplus p\C_1)^\bot&=&\C_0^\bot\cap(p\C_1)^\bot=\C_0^\bot\cap(\C_1^\bot+p^{r-1}\Z_{p^r}^n)\\
 &=& \C_0^\bot\cap\C_1^\bot+p^{r-1}\C_0^\bot\\
 &=& (\C_0\oplus\C_1)^\bot +p^{r-1}\C_0^\bot.
 \end{eqnarray*}
\noindent  By Theorem 1, we can conclude  that there exists a free code $B_{r-1}$ such that $$(\C_0\oplus p\C_1)^\bot=(\C_0\oplus\C_1)^\bot \oplus p^{r-1}B_{r-1}.$$
  Suppose $rank(B_{r-1})=l_{r-1}$, then we have:
 \begin{eqnarray*}
 p\mbox{-}dim[(\C_0\oplus p\C_1)^\bot]&=&p\mbox{-}dim(\C_0\oplus\C_1)^\bot+ p\mbox{-}dim(p^{r-1}B_{r-1})\\
 &=&nr-(k_0+k_1)r+l_{r-1}.
 \end{eqnarray*}
\noindent
 On the other hand, $p\mbox{-}dim[(\C_0\oplus p\C_1)^\bot]=nr-(k_0r+(r-1)k_1)$. We conclude  that $rank(B_{r-1})=k_1$.
 We repeat the same procedure with $\C_0\oplus p\C_1\oplus p^2\C_2$.
 \begin{eqnarray*}
 (\C_0\oplus p\C_1\oplus p^2\C_2)^\bot&=&(\C_0\oplus p\C_1)^\bot\cap(p^2\C_2)^\bot=[(\C_0\oplus\C_1)^\bot\oplus p^{r-1}B_{r-1}]\cap(\C_2^\bot+p^{r-2}\Z_{p^r}^n)\\
 &=&(\C_0\oplus\C_1\oplus\C_2)^\bot\oplus p^{r-1}(B_{r-1}\cap\C_2^\bot)+p^{r-2}(\C_0\oplus\C_1)^\bot+p^{r-1}B_{r-1}\\
 &=&(\C_0\oplus\C_1\oplus\C_2)^\bot\oplus p^{r-1}B_{r-1}+p^{r-2}(\C_0\oplus \C_1)^\bot.
 \end{eqnarray*}
 By Theorem 1, there exists a free convolutional code $B_{r-2}$ such that $$(\C_0\oplus p\C_1\oplus p^2\C_2)^\bot=(\C_0\oplus\C_1\oplus\C_2)^\bot\oplus p^{r-1}B_{r-1}\oplus p^{r-2}B_{r-2}.$$
  Suppose that $rank(B_{r-2})=l_{r-2}$, then we have
 \begin{eqnarray*}
 p\mbox{-}dim(\C_0\oplus p\C_1\oplus p^2\C_2)^\bot&=&p\mbox{-}dim[(\C_0\oplus\C_1\oplus\C_2)^\bot]+p\mbox{-}dim(p^{r-1}B_{r-1})+p\mbox{-}dim(p^{r-2}B_{r-2})\\
 &=&nr-(k_0+k_1+k_2)r+k_1+2l_{r-2}.
 \end{eqnarray*}
 On the other hand
 \begin{eqnarray*}
 p\mbox{-}dim(\C_0\oplus p\C_1\oplus p^2\C_2)^\bot&=&nr-[k_0r+k_1(r-1)+k_2(r-2)]\\
 &=&(n-k_0-k_1)r+k_1+2k_2.
 \end{eqnarray*}
 We conclude that $rank(B_{r-2})=k_2$. Repeating this procedure $r-1$ times we obtain the desired result.
\qed
\end{proof}
%

\noindent The following result is a consequence of this theorem and generalizes the well-known result for the field case: if $\C$ is a convolutional code of length $n$ over $\mathbb F((D))$, where $\mathbb F$ is a finite field, then $dim \C + dim \C^{\bot} = dim \mathbb F((D))=n$.

\begin{corollary}
Let $\C$ be a convolutional code of length $n$ over $\Z^n_{p^r}$. Then
$$
p\mbox{-}dim(\C)+ p\mbox{-}dim(\C^{\bot}) = p\mbox{-}dim(\Z^n_{p^r}((D))=nr.
$$
\end{corollary}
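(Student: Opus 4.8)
The plan is to deduce the statement from the preceding structure theorem together with the $p$-dimension formula $p\mbox{-}dim(\C)=\sum_{i=0}^{r-1}(r-i)k_i$. First I would recall the decomposition obtained in Section~\ref{subsec:2_2}: every convolutional code $\C$ of length $n$ can be written as $\C=\C_0\oplus p\C_1\oplus\ldots\oplus p^{r-1}\C_{r-1}$ with each $\C_i$ free of rank $k_i$ and with the matrix whose block rows are $G_0(D),\ldots,G_{r-1}(D)$ full row rank. This last property means precisely that $\C_0\oplus\C_1\oplus\ldots\oplus\C_{r-1}$ is a direct sum and is free of rank $\sum_{i=0}^{r-1}k_i$, so the hypotheses of the preceding theorem are met.

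Applying that theorem yields $\C^{\bot}=B_0\oplus pB_1\oplus\ldots\oplus p^{r-1}B_{r-1}$ with each $B_i$ free, $B_0=(\C_0\oplus\ldots\oplus\C_{r-1})^{\bot}$, and $rank(B_i)=rank(\C_{r-i})=k_{r-i}$ for $i=1,\ldots,r-1$. The remaining rank is supplied by Theorem~\ref{t_1}: since $\C_0\oplus\ldots\oplus\C_{r-1}$ is free of rank $\sum_{i=0}^{r-1}k_i$, its dual $B_0$ is free of rank $n-\sum_{i=0}^{r-1}k_i$.

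Next I would apply the $p$-dimension formula to both codes. For $\C$ this gives $p\mbox{-}dim(\C)=\sum_{i=0}^{r-1}(r-i)k_i=r\sum_{i=0}^{r-1}k_i-\sum_{i=1}^{r-1}ik_i$, while for $\C^{\bot}$ it reads $p\mbox{-}dim(\C^{\bot})=r\,rank(B_0)+\sum_{i=1}^{r-1}(r-i)\,rank(B_i)=r\bigl(n-\sum_{i=0}^{r-1}k_i\bigr)+\sum_{i=1}^{r-1}(r-i)k_{r-i}$. Adding the two and re-indexing the last sum by $j=r-i$ turns $\sum_{i=1}^{r-1}(r-i)k_{r-i}$ into $\sum_{j=1}^{r-1}jk_j$, which cancels against $-\sum_{i=1}^{r-1}ik_i$; the two $\pm r\sum_{i=0}^{r-1}k_i$ terms also cancel, leaving exactly $rn$. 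Finally, since $\ZZ^n((D))$ is free of rank $n$, Lemma~\ref{1_0} (with $i=0$) gives $p\mbox{-}dim(\ZZ^n((D)))=nr$, which identifies the common value and closes the argument.

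The computation is entirely mechanical; the only point requiring care is the bookkeeping. One must verify that the general decomposition really does produce a \emph{free} code $\C_0\oplus\ldots\oplus\C_{r-1}$ (this is exactly what the full-row-rank stacked matrix guarantees), so that the preceding theorem applies, and then keep the index shift $j=r-i$ straight so that the cancellation of the $\sum ik_i$ terms is transparent. I do not expect any genuine obstacle beyond this.
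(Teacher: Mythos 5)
Your proposal is correct and follows essentially the same route as the paper: apply the preceding decomposition theorem for $\C^{\bot}$, use $rank(B_i)=k_{r-i}$ together with Theorem~\ref{t_1} (equivalently, Corollary~\ref{c21}) to get the rank of $B_0$, and sum the $p$-dimensions so the cross terms cancel. The only difference is cosmetic — you are slightly more explicit than the paper in checking that $\C_0\oplus\ldots\oplus\C_{r-1}$ is free (so the theorem's hypothesis holds) and in computing $p\mbox{-}dim(B_0)$ as $r\,rank(B_0)$ rather than quoting Corollary~\ref{c21} directly.
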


\begin{proof}
Let $\C=\C_0\oplus p\C_1\oplus\ldots\oplus p^{r-1}\C_{r-1}$ where $\C_i$ is free of rank $k_i$, $i=0, 1, \dots, r-1$. Consider also the free convolutional codes of length $n$ over $\Z_{p^r}((D))$, $B_i,i=0,\ldots,r-1,$ such that
 $\C^\bot= B_0\oplus p B_1\oplus\ldots\oplus p^{r-1}B_{r-1}$, and
 \begin{enumerate}
 \item  $B_0=(\C_0\oplus\ldots\oplus \C_{r-1})^\bot$.
 \item  $rank(B_i)=rank(\C_{r-i})$, $i\in\{1,\ldots,r-1\}$.
 \end{enumerate}
  Note that $p\mbox{-}dim(\C)=\displaystyle\sum_{i=0}^{r-1}(r-i)k_i$. From 2. and Lemma \ref{1_0}, it follows that $p\mbox{-}dim(p^iB_i)=(r-i)k_{r-i}$ and from 1. and Corollary \ref{c21} it follows that $p\mbox{-}dim(B_0)=nr-r(k_0+k_1, \cdots + k_{r-1})$. Thus,
\begin{eqnarray*}
 p\mbox{-}dim(\C^{\bot}) & = & p\mbox{-}dim(B_0) + p\mbox{-}dim(p B_1) + \cdots + p\mbox{-}dim(p^{r-1}B_{r-1}) \\
 & = & nr-r(k_0+k_1+ \cdots + k_{r-1}) + (r-1)k_{r-1} + (r-2)k_{r-2} + \cdots + k_1 \\
 & = & nr - (k_0r+k_1(r-1)+\cdots + k_{r-1})\\
 & = & nr -  p\mbox{-}dim(\C).
 \end{eqnarray*}
\qed
\end{proof}

\begin{remark}
{\it In the case of block code over a finite ring, we can find this result using the theorem of J.Wood in \cite{wo}.
Indeed, if $\C$ is a block code of length $n$ over $\R$. $\R$ is a Frobenius ring and then we have $$|\C||\C^\bot|=|\R^n|.$$
If $p\mbox{-}dim(\C)=k$, we have $|\C|=p^k$ and then $|\C^\bot|=p^{nr-k}$ which gives} $$p\mbox{-}dim(\C^\bot)=nr-k.$$
\end{remark}

\begin{acknowledgement}
The work of the second, third and fourth authors was supported in part by the Portuguese Foundation for Science and Technology (FCT-Funda\c{c}\~ao para a Ci\^encia e a Tecnologia), through CIDMA - Center for Research and Development in Mathematics and Applications, within project UID/MAT/04106/2013.
\end{acknowledgement}

\bibliographystyle{spmpsci}
\bibliography{bibliog,biblio}

\end{document}